\documentclass[a4paper,12pt]{amsart}
\usepackage{amsmath}
\usepackage{amssymb}
\usepackage{mathrsfs}
\usepackage{enumerate}
\usepackage{ifthen}
\usepackage{graphicx}
\usepackage[T1]{fontenc} 
\usepackage{tabularx}
\usepackage{multirow}
\usepackage{color,soul}

\setlength{\topmargin}{-0.02in}
\setlength{\textheight}{9.2in} 

\everymath{\displaystyle}

\nonstopmode \numberwithin{equation}{section}
\setlength{\textwidth}{15cm} \setlength{\oddsidemargin}{0cm}
\setlength{\evensidemargin}{0cm} \setlength{\footskip}{40pt}
\pagestyle{plain}

\newtheorem{thm}{Theorem}[section]

\newtheorem{lem}{Lemma}[section]

\theoremstyle{definition}


\newcounter{minutes}\setcounter{minutes}{\time}
\divide\time by 60
\newcounter{hours}\setcounter{hours}{\time}
\multiply\time by 60
\addtocounter{minutes}{-\time}

\newcounter {own}
\def\theown {\thesection       .\arabic{own}}

{\qed\bigskip}

\newcounter{alphabet}



\begin{document}
\title{ On close-to-convex functions}


\author{Md Nurezzaman}
\address{Md Nurezzaman, National Institute Of Technology Durgapur, West Bengal, India}
\email{nurezzaman94@gmail.com}

\subjclass[2010]{Primary 30C45, 30C55}
\keywords{univalent functions, starlike functions, convex functions, close-to-convex function, radius of convexity, Fekete-Szeg\"{o} problem}

\def\thefootnote{}
\footnotetext{ {\tiny File:~\jobname.tex,
printed: \number\year-\number\month-\number\day,
          \thehours.\ifnum\theminutes<10{0}\fi\theminutes }
} \makeatletter\def\thefootnote{\@arabic\c@footnote}\makeatother

\begin{abstract}
We consider a new subclass $\widetilde{\mathcal{K}}_u$ of close-to-convex functions in the unit disk $\mathbb{D}:=\{z\in\mathbb{C}:|z|<1\}$. For this class, we obtain sharp estimates of the Fekete-Szeg\"{o} problem, growth and distortion theorem, radius of convexity and estimate of the pre-Schwarzian norm.
\end{abstract}

\thanks{}

\maketitle
\pagestyle{myheadings}
\markboth{ Md Nurezzaman }{On close-to-convex functions}

\section{Introduction}
Let $\mathcal{H}$ be the collection of all analytic functions in the open unit disk $\mathbb{D}:=\{z\in\mathbb{C}:|z|<1\}$. Also, let $\mathcal{A}$ be the subclass of $\mathcal{H}$ consisting of functions $f$ with the normalization $f(0)=f'(0)-1=0$, that is, having the Taylor series expansion \begin{align}\label{N-01} 
f(z)= z+\sum_{n=2}^{\infty}a_n z^n. 
\end{align}
Further, let $\mathcal{S}$ denote the collection of all univalent (or one-to-one) functions in $\mathcal{A}$. For $0\le\alpha<1$, a function $f\in\mathcal{A}$ is said to belong to the class $\mathcal{S}^*(\alpha)$ ( respectively, $\mathcal{C}(\alpha)$), known as starlike ( respectively, convex) functions of order $\alpha$, if $\mathrm{Re}~zf'(z)/f(z)>\alpha$ ( respectively, $\mathrm{Re}\left(1+zf''(z)/f'(z)\right)>\alpha$) in $\mathbb{D}$. Specifically, if $\alpha=0$, then $\mathcal{S}^*(0)=\mathcal{S}^*$ and $\mathcal{C}(0)=\mathcal{C}$ are the well-known classes of starlike and convex functions, respectively.\\
A function $f\in\mathcal{A}$ is said to be close-to-convex if the complement of the image-domain $f(\mathbb{D})$ in $\mathbb{C}$ is the union of rays that are disjoint (except that the origin of one ray may lie on another one of the rays). Analytically, a function $f\in \mathcal{A}$ is close-to-convex (see \cite{1983-Duren, 1952-Kaplan}) if and only if there exists a starlike function $g\in \mathcal{S}^*$ and a real number $\alpha\in(-\pi/2,\pi/2)$ such that
\begin{align*}
{\rm Re\,} \left(e^{i\alpha}\frac{zf'(z)}{g(z)}\right)>0,\quad z\in\mathbb{D}.
\end{align*}
Let $\mathcal{K}$ be the class of all close-to-convex functions, which was first introduced by Kaplan \cite{1952-Kaplan}.
A function $f\in\mathcal{A}$ is said to be strongly close-to-convex of order $\alpha>0$,
if there exist a real number $\beta\in(-\pi/2, \pi/2)$ and a starlike function $g\in\mathcal{S}^*$, such that
$$
\left|\arg\left(e^{i\beta}\frac{zf'(z)}{g(z)}\right)\right|<\frac{\alpha\pi}{2}\quad \mbox{ for }z\in\mathbb{D}.
$$
Let $\widetilde{\mathcal{K}}(\alpha)$ denote the class of all strongly close-to-convex functions ( see also \cite{1956-Reade}). We remark that $\widetilde{\mathcal{K}}(1)=\mathcal{K}$ and that $\widetilde{\mathcal{K}}(\alpha)$ is properly contained in $\mathcal{K}$ if $\alpha<1$. We can also take $\alpha>1$ but in this situation $f$ is not always univalent. For further information about these classes, we refer to \cite{1983-Duren, 1983-Goodman, 1984-Hallenbeck-Macgregor}.\\

In 1968, Singh \cite{1968-Singh} introduced and studied the class $\mathcal{S}_u^*$ consisting of function $f$ in $\mathcal{A}$ such that
\begin{align*}
\left|\frac{zf'(z)}{f(z)}-1\right|<1\quad \text{for}~z\in\mathbb{D}.
\end{align*}
It is easy to see that every function in $\mathcal{S}_u^*$ is also belongs to $\mathcal{S}^*$.  For $n=2,3,\dots,$ the functions $f_n(z)=ze^\frac{z^{n-1}}{n-1}$ belongs to the class $\mathcal{S}_u^*$. The functions $f_n$ plays important role for many extremal problems in $\mathcal{S}_u^*$. Singh \cite{1968-Singh} also obtained the distortion theorem, coefficient estimate and radius of convexity for the class $\mathcal{S}_u^*$. Recently, Allu et al. \cite{2020-Allu-Sokol-Thomas} introduced a close-to-convex analogue of $\mathcal{S}_u^*$ and the class is denoted as $\mathcal{K}_u$. A function $f$ of the form \eqref{N-01} in $\mathcal{A}$ belongs to $\mathcal{K}_u$ if there exists a starlike function $g\in\mathcal{S}^*$ such that
\begin{align*}
\left|\frac{zf'(z)}{g(z)}-1\right|<1\quad \text{for }~z\in\mathbb{D}.
\end{align*}
Clearly, every function in $\mathcal{K}_u$ is close-to-convex and hence univalent. For more details about the class $\mathcal{K}_u$ see \cite{2024-Ali-Nurezzaman, 2020-Allu-Sokol-Thomas}. Analogous to the class $\mathcal{K}_u$, we introduce a new class $\widetilde{\mathcal{K}}_u$. A function $f\in\mathcal{A}$ of the form \eqref{N-01} belongs to $\widetilde{\mathcal{K}}_u$, if there exists a function $g\in\mathcal{S}_u^*$ such that 
$$
\left|\frac{zf'(z)}{g(z)}-1\right|<1,\quad z\in\mathbb{D}.
$$
Since, the inclusion $\widetilde{\mathcal{K}}_u\subset \mathcal{K}_u\subset \mathcal{K}\subset\mathcal{S}$ is true, so any function $f$ in $\widetilde{\mathcal{K}}_u$ is univalent. Also, as the Koebe function $f(z)=z/(1-z)^2$ belongs to the class $\mathcal{S}^*$ but not in $\mathcal{S}_u^*$, so the inclusion $\widetilde{\mathcal{K}}_u\subset \mathcal{K}_u$ is proper.\\ 

For $\lambda\in\mathbb{C}$, the Fekete-Szeg\"{o} problem is to find the maximum value of the coefficient functional
\begin{align}\label{N-05}
\Phi_\lambda(f)=|a_3-\lambda a_2^2|,
\end{align}
when $f$ of the form \eqref{N-01} varies over a class of function $\mathcal{F}$. It is important to note that $\Phi_\lambda(f)$ behaves well with respect to rotation, namely, $\Phi_\lambda(R_\theta f)=\Phi_\lambda(f)$ for $\theta\in\mathbb{R}$. Here $R_\theta f$ denotes the rotation of $f$ by angle $\theta$, more precisely, $R_\theta f(z)=e^{-i\theta}f(e^{i\theta}z)$. Fekete and Szeg\"{o} \cite{1933-Fekete-Szego} proved one of the most important result in geometric function theory of complex analysis. In $1933$, Fekete and Szeg\"{o} \cite{1933-Fekete-Szego} proved that the functional $\Phi_\lambda(f)$ satisfies the following result by using L\"{o}ewner differential method
\begin{align*}
\max\limits_{f\in\mathcal{S}} \Phi_\lambda(f)=
\begin{cases}
3-4\lambda\quad &\mbox{~for~}\lambda\le0, \\
1+2e^{-\frac{2\lambda}{1-\lambda}}\quad &\mbox{~for~}0\leq\lambda\le1, \\
4\lambda-3\quad &\mbox{~for~}\lambda\ge1.
\end{cases}
\end{align*}
 Later, Pfluger \cite{1985-Pfluger} has considered the complex values of $\lambda$ and provided
$$
\max\limits_{f\in\mathcal{S}} \Phi_\lambda(f)=1+2\left|e^{-\frac{2\lambda}{1-\lambda}}\right|,\quad \lambda\in\mathbb{C}.
$$  
This inequality is sharp in the sense that for each $\lambda\in\mathbb{C}$ there exists a function in $\mathcal{S}$ such that equality holds.
In $1969$, Keogh et al. \cite{1969-Keogh-Merkes} obtained the sharp bound of $\Phi_\lambda(f)$ for functions in the classes $\mathcal{C},~\mathcal{S}^*$ and $\mathcal{K}$ with $\alpha=0$.
Later, in $1987$, Koepf \cite{1987-Koepf-1, 1987-Koepf-2} extended the above result over the classes $\mathcal{K}$ and $\widetilde{\mathcal{K}}(\alpha)$ and obtained the sharp bound of $\Phi_\lambda(f)$ for any $\lambda\in\mathbb{R}$. Ma-Minda \cite{1991-Ma-Minda,1992-Ma-Minda} studied the classes of strongly starlike functions, strongly convex functions and Ma-Minda starlike functions and obtained sharp estimates of $\Phi_\lambda(f),~\lambda\in\mathbb{R}$ for functions belongs to these classes. For $0\le\alpha\le1$, Gawad et al. \cite{1992-Abdel-Gawad} estimated that if a function $f$ belongs to the class $\widetilde{\mathcal{K}}(\alpha)$ with $\beta=0$, then the functional $\Phi_\lambda(f)$ has sharp bound for any $\lambda\in\mathbb{R}$. Later, London \cite{1993-London} extended the result over the class $\widetilde{\mathcal{K}}(\alpha)$ with $\beta=0$, and obtained sharp bound of $\Phi_\lambda(f)~(\lambda\in\mathbb{R})$ for any $\alpha>0$. For more information about the estimates of the functional $\Phi_\lambda(f)$, one can see the manuscripts of \cite{ 2007-Choi-Kim-Sugawa, 2010-Bhowmik, 2014-Kowalczyk-Lecko, 1997-Ma-Minda, 1974-Singh-Singh}.

\section{Some preliminary results}
Before we prove our main results, let us discuss some well known results that we will utilize throughout the article to derive our results as well as to find our extremal functions. Let, $\mathcal{B}$ be the subclass of $\mathcal{H}$ consisting of all functions $f$ in $\mathcal{H}$ with $|f(z)|<1$ for all $z\in\mathbb{D}$ and $\mathcal{B}_0$ be the subclass of $\mathcal{B}$ with $f(0)=0$. Functions in $\mathcal{B}_0$ are also known as Schwarz functions. If $\omega\in\mathcal{B}_0$, then $|\omega(z)|\le |z|$ and $|\omega'(0)|\le 1$. Further, in each of these inequalities, equality occurs if and only if $\omega(z)=e^{i\alpha}z$, $\alpha\in\mathbb{R}$. For functions in the class $\mathcal{B}$, the Schwarz-Pick lemma is a natural extension of Schwarz lemma. If $\omega\in\mathcal{B}$, then $|\omega'(z)|\le (1-|\omega(z)|^2)/(1-|z|^2)$ for $z\in\mathbb{D}$. Moreover, equality holds if and only if $\omega$ is a conformal self-map of $\mathbb{D}$. Here, a function $\omega\in\mathcal{B}$, is a conformal self-map of $\mathbb{D}$ if and only if $\omega$ is of the form
$$
\omega(z)=e^{i\alpha}\frac{z-a}{1-\bar{a}z},\quad 0\leq\alpha\leq2\pi \quad \mathrm{and} \quad a\in\mathbb{D}.
$$
One of the most important conformal self-map of $\mathbb{D}$ is the finite Blaschke product (see \cite[Page 265]{2001-Gamelin}). For $n\in\mathbb{N}$, Blaschke product of order $n$ is defined as the form
\begin{align*}
\omega(z)=e^{i\alpha}\left(\frac{z-a_1}{1-\bar{a_1}z}\right)\left(\frac{z-a_2}{1-\bar{a_2}z}\right)\cdots \left(\frac{z-a_n}{1-\bar{a_n}z}\right),
\end{align*}
where $\alpha\in[0,2\pi]$ and $a_1,a_2\ldots ,a_n\in\mathbb{D}$. In this article, the Blaschke product of order $2$ has been plays a crucial role for constructing extremal functions in the class $\widetilde{\mathcal{K}}_u$.

In the next lemma, we will discuss some preliminary results for the class $\mathcal{S}$. The following lemma due to Privalov \cite{1924-Privalov} will be helpful to prove our result.  
\begin{lem}\cite[Vol. \textbf{I}, Page 67]{1983-Goodman}\label{N-15}
Let $f\in\mathcal{S}$ and $z=re^{i\theta}\in\mathbb{D}$. If 
$$
m'(r)\le|f'(z)|\le M'(r),
$$
where $m'(r)$ and $M'(r)$ are real-valued functions of $r$ in $[0,1)$, then
$$
\int_0^rm'(t)dt\le|f(z)|\le\int_0^rM'(t)dt.
$$
\end{lem}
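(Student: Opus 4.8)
The plan is to treat the two inequalities separately, observing that the upper bound needs only that $f$ is analytic with $f(0)=0$, whereas the lower bound genuinely uses univalence. For the upper bound I would integrate $f'$ along the radial segment joining $0$ to $z=re^{i\theta}$. Writing $f(z)=f(z)-f(0)=\int_0^r e^{i\theta}f'(te^{i\theta})\,dt$ and applying the triangle inequality for integrals gives $|f(z)|\le\int_0^r|f'(te^{i\theta})|\,dt$. Since the point $te^{i\theta}$ has modulus $t$, the hypothesis yields $|f'(te^{i\theta})|\le M'(t)$, and hence $|f(z)|\le\int_0^r M'(t)\,dt$, as required.

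For the lower bound the obstacle is that $|f(z)|$ cannot be bounded below by integrating along the radius, so I would instead exploit the geometry of the image together with univalence. First I would reduce to the case $m'\ge 0$: replacing $m'$ by $\max\{m',0\}$ preserves the hypothesis $m'(r)\le|f'(z)|$ (because $|f'(z)|\ge0$ as well) and only enlarges $\int_0^r m'(t)\,dt$, so it suffices to prove the bound under $m'\ge0$. Set $\mathbb{D}_r=\{z:|z|<r\}$ and let $d_r=\mathrm{dist}(0,\partial f(\mathbb{D}_r))$. Because $f$ is univalent and continuous on $\overline{\mathbb{D}_r}$, it is a homeomorphism of $\overline{\mathbb{D}_r}$ onto its image, so $\partial f(\mathbb{D}_r)=f(\{|z|=r\})$; in particular $w_0:=f(z)$ lies on this boundary, whence $|f(z)|=|w_0|\ge d_r$. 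The crux is then to bound $d_r$ from below.

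To do this I would choose a boundary point $w_1\in\partial f(\mathbb{D}_r)$ with $|w_1|=d_r$. Since the open disk $\{|w|<d_r\}$ is contained in $f(\mathbb{D}_r)$, the half-open segment $[0,w_1)$ lies in $f(\mathbb{D}_r)$, and pulling it back by the univalent branch yields an arc $\gamma(s)=f^{-1}(sw_1/d_r)$, $s\in[0,d_r)$, with $\gamma(0)=0$ and $|\gamma(s)|\to r$ as $s\to d_r^-$ (the preimage tends to $\partial\mathbb{D}_r$ as the image tends to $\partial f(\mathbb{D}_r)$). Parametrizing by $|w|=s$, so that $|f'(\gamma(s))|\,|\gamma'(s)|=1$, I would estimate
\[
d_r=\int_0^{d_r}|f'(\gamma(s))|\,|\gamma'(s)|\,ds \;\ge\; \int_0^{d_r}m'(|\gamma(s)|)\,\Big|\tfrac{d}{ds}|\gamma(s)|\Big|\,ds \;\ge\; \int_0^r m'(t)\,dt,
\]
where the first inequality uses $m'(|\gamma|)\le|f'(\gamma)|$ together with $|d|\gamma|/ds|\le|\gamma'|$, and the second combines $m'\ge0$ with the substitution $t=|\gamma(s)|$, the antiderivative $M(t)=\int_0^t m'$ of $m'$ giving $M(r)-M(0)=\int_0^r m'(t)\,dt$ since $|\gamma|$ runs from $0$ to $r$. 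Together with $|f(z)|\ge d_r$ this finishes the proof.

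I expect the main subtlety to be the justification that the pulled-back arc starts at the origin and reaches modulus $r$ while remaining inside $\mathbb{D}_r$; this is precisely where univalence enters, through the homeomorphism $f|_{\overline{\mathbb{D}_r}}$ and the choice of the \emph{nearest} boundary point $w_1$, which guarantees the segment $[0,w_1)$ never leaves the image. A naive attempt to pull back the full segment $[0,w_0]$ would fail because $f(\mathbb{D})$ need not be starlike about the origin, so that segment may exit the domain; replacing $w_0$ by $w_1$ and using $|f(z)|\ge d_r$ is the device that circumvents this.
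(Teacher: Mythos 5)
The paper never proves this lemma: it is quoted, with citation, from Goodman's book (Vol.~I, p.~67), where it is attributed to Privalov, so there is no in-paper argument to compare against. Your proof is correct and is essentially the classical argument from that source: the upper bound by integrating $f'$ along the radius, and the lower bound by taking the point $w_1$ of $\partial f(\mathbb{D}_r)$ nearest the origin, pulling the segment $[0,w_1)$ back under $f^{-1}$, and estimating the arc integral, with univalence entering exactly where you say it does (the boundary correspondence $\partial f(\mathbb{D}_r)=f(\partial \mathbb{D}_r)$, valid because $f$ is injective on all of $\mathbb{D}\supset\overline{\mathbb{D}_r}$, and the containment $\{|w|<d_r\}\subseteq f(\mathbb{D}_r)$). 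The only loose end is an implicit regularity assumption on $m'$ (continuity, or at least enough to run the fundamental theorem of calculus in the substitution $t=|\gamma(s)|$), which the lemma's statement leaves unspecified and which is harmless in the paper's application, where $m'(r)=e^{-r}(1-r)$ is smooth.
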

\vspace{4mm}

The following result was proved by Choi et al. \cite{2007-Choi-Kim-Sugawa}. But here we have written a part of it for our convenient.

\begin{lem}\label{N-17}\cite{2007-Choi-Kim-Sugawa}
For $A, B\in\mathbb{C}$ and $K, L, M\in\mathbb{R}$, let
\begin{align*}
\Omega_(A, B, K, L, M)=\max_{\substack{|u_1\leq 1\\ |v_1|\leq 1}}\left(|A|(1-|u_1|^2)+|B|(1-|v_1|^2)+|Ku_1^2+Lv_1^2+2Mu_1v_1|\right),
\end{align*}
Further consider the following three conditions involving $A, B, K, L, M$:
\begin{enumerate}
\item[\textbf{(A1)}] $|A|\geq \max\{|K|\sqrt{1-\dfrac{M^2}{KL}},|M|-|K|\}$;
\item[\textbf{(B1)}] $|B|\geq \max\{|L|\sqrt{1-\dfrac{M^2}{KL}},|M|-|L|\}$;
\item[\textbf{(B2)}] $|L|+|M|\leq |B| <|L|\sqrt{1-\dfrac{M^2}{KL}}$.
\end{enumerate}
If $KL\geq 0$, and $D=(|K|-|A|)(|L|-|B|)-M^2$ then
$$
\Omega(A, B, K, L, M)= \begin{cases}
|A|+|B| &\text{if }~|A|+|B|\ge |K|+|L|~\text{and}~D\ge0,
\\
|A|+|L|-\dfrac{M^2}{|K|-|L|} &\text{if }~|A|> |M|+|K|~\text{and}~D<0,\\
|B|+|K|-\dfrac{M^2}{|L|-|B|} &\text{if }~|B|> |M|+|L|~\text{and}~D<0,\\
|K|+2|M|+|L| &\text{otherwise}.
\end{cases}
$$
If $KL<0$, then $\Omega(A, B, K, L, M)=|A|+|B|+\max\{0,R\},$ where
\begin{align*}
R=\begin{cases}
0,\quad &\text{when A1 \& B1 holds }, \\
|L|-|B|+\dfrac{M^2}{|A|+|K|} ,\quad &\text{when A1 holds but B1 \& B2 does not hold}.
\end{cases}
\end{align*}
\end{lem}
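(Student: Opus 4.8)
The plan is to maximise the functional directly, exploiting that $A$ and $B$ enter only through their moduli and that the phases of $u_1,v_1$ decouple. Writing $u_1=re^{i\phi}$ and $v_1=se^{i\psi}$ with $r,s\in[0,1]$, the first two summands depend only on $r,s$, while after factoring out $e^{i(\phi+\psi)}$ one has
\begin{align*}
\left|Ku_1^2+Lv_1^2+2Mu_1v_1\right|=\left|Kr^2e^{i\delta}+Ls^2e^{-i\delta}+2Mrs\right|,\qquad \delta:=\phi-\psi .
\end{align*}
Thus the common phase is irrelevant, and the problem reduces to a maximisation in the three real variables $r,s\in[0,1]$ and $\delta\in\mathbb{R}$.

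First I would optimise over $\delta$. Setting $a=Kr^2$, $b=Ls^2$, $c=2Mrs$ and $x=\cos\delta$, one computes
\begin{align*}
\left|ae^{i\delta}+be^{-i\delta}+c\right|^2=4ab\,x^2+2(a+b)c\,x+\left(c^2+(a-b)^2\right),
\end{align*}
a quadratic in $x\in[-1,1]$ whose leading coefficient $4ab$ has the sign of $KL$. When $KL\ge 0$ the parabola is convex, so the maximum is attained at $x=\pm1$ and equals $(|a+b|+|c|)^2$; hence $\max_\delta|\cdots|=|K|r^2+|L|s^2+2|M|rs$. When $KL<0$ the parabola is concave and the maximum may occur at its interior vertex, giving a different, $r,s$-coupled expression. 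This dichotomy is precisely the split between the $KL\ge 0$ and $KL<0$ branches of the statement.

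It remains to maximise over $(r,s)\in[0,1]^2$. In the branch $KL\ge 0$ this amounts to maximising the quadratic form
\begin{align*}
\Phi(r,s)=|A|+|B|+(|K|-|A|)r^2+(|L|-|B|)s^2+2|M|rs,
\end{align*}
whose associated symmetric matrix $\begin{pmatrix} |K|-|A| & |M| \\ |M| & |L|-|B| \end{pmatrix}$ has determinant $D$. The maximiser sits either at the unique interior critical point $(0,0)$, giving $|A|+|B|$; at the corner $(1,1)$, giving $|K|+|L|+2|M|$; or on an edge, where the downward parabola $r\mapsto\Phi(r,1)$ has interior maximum $|A|+|L|-M^2/(|K|-|A|)$ provided $|A|>|M|+|K|$ (so that the vertex lies in $[0,1]$), and symmetrically $|B|+|K|-M^2/(|L|-|B|)$. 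Comparing these candidates, with the sign of $D$ governing convexity of the form and conditions (A1), (B1), (B2) pinning down which vertex or edge realises the global maximum, yields the four listed cases.

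The main obstacle is the $KL<0$ branch: there the inner $\delta$-maximum is attained at the vertex and couples $r$ and $s$, so the subsequent maximisation over the square is no longer a clean quadratic form, and one must re-derive the boundary and interior candidates and test them against (A1), (B1), (B2). Throughout, the delicate part is the bookkeeping of converting the geometric statement ``the maximiser lies at this corner / edge / interior point'' into the stated algebraic thresholds such as $|A|\ge|K|\sqrt{1-M^2/(KL)}$, which demarcate exactly the regions of $(A,B,K,L,M)$-space on which each candidate dominates.
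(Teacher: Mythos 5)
The first thing to note is that the paper does not prove this lemma at all: it is quoted (with attribution) from Choi--Kim--Sugawa \cite{2007-Choi-Kim-Sugawa}, so the only meaningful benchmark is the argument in that source, which your plan does follow in outline. Your opening reduction is correct and is exactly how such a proof must begin: writing $u_1=re^{i\phi}$, $v_1=se^{i\psi}$, factoring out $e^{i(\phi+\psi)}$, and obtaining the quadratic $4ab\,x^2+2(a+b)c\,x+c^2+(a-b)^2$ in $x=\cos\delta$ whose leading coefficient has the sign of $KL$. Your edge computation in the $KL\ge0$ branch is also right, and it is worth pointing out that it actually exposes a typo in the lemma as transcribed in the paper: the maximum of $r\mapsto\Phi(r,1)$ is $|A|+|L|-M^2/(|K|-|A|)$, so the stated denominator $|K|-|L|$ should read $|K|-|A|$; the paper's own application of the lemma in Subcase 2a indeed uses $|K|-|A|$.

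However, the proposal has genuine gaps and is at best half a proof. First, the $KL<0$ branch is not an afterthought to be ``re-derived'': it is precisely where the conditions (A1), (B1), (B2) with their thresholds $|K|\sqrt{1-M^2/(KL)}$ and $|L|\sqrt{1-M^2/(KL)}$ originate, and where the quantity $R=|L|-|B|+M^2/(|A|+|K|)$ must be produced; it is also the branch the paper actually relies on (Subcases 2b, 3a, 3b all have $KL<0$). Your text stops at naming this ``the main obstacle'' -- one must compute the concave parabola's vertex value, split according to whether the vertex $x_0=-(a+b)c/(4ab)$ lies in $[-1,1]$ (this dichotomy is what generates the square-root thresholds), add $|A|(1-r^2)+|B|(1-s^2)$, and carry out the coupled maximization over $[0,1]^2$. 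None of that is done, so the entire second half of the statement is unproven. Second, even in the $KL\ge0$ branch, ``comparing these candidates \ldots yields the four listed cases'' is an assertion, not an argument: to get exactly the stated case conditions you must show, for instance, that $|A|+|B|\ge|K|+|L|$ together with $D\ge0$ forces the value at the origin to dominate the corner and edge values. Note also two slips in your candidate bookkeeping: $(0,0)$ is a corner of the domain $[0,1]^2$, not an interior critical point, and you omit the corners $(1,0)$ and $(0,1)$, with values $|K|+|B|$ and $|A|+|L|$, which must be checked (they are ruled out as strict maxima only after using $D\ge0$ and the other hypotheses). In short: correct strategy, correct first step, but the parts of the lemma that require real work -- and the parts this paper actually uses -- are missing.
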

\vspace{4mm}

\section{Growth and Distortion theorems}
Allu et al. \cite{2020-Allu-Sokol-Thomas} obtained sharp bound of growth and distortion for functions in the class $\mathcal{K}_u$. In the next theorem, we obtain sharp growth and distortion for functions in the class $\widetilde{\mathcal{K}}_u$.
\begin{thm}
If $f\in\widetilde{\mathcal{K}}_u$ and $z=re^{i\theta},~0\le r<1$, then 
\begin{align}\label{N-20}
re^{-r}\le |f(z)|\le re^r.
\end{align}
\begin{align}\label{N-25}
e^{-r}(1-r)\le |f'(z)|\le e^r(1+r),
\end{align}
\end{thm}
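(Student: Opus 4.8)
The plan is to first control the auxiliary starlike function $g\in\mathcal{S}_u^*$, then transfer the bounds to $zf'$ to get the distortion estimate \eqref{N-25}, and finally integrate to recover \eqref{N-20}. Since $g\in\mathcal{S}_u^*$ with $g(z)=z+\cdots$, the quantity $\omega(z):=zg'(z)/g(z)-1$ is analytic in $\mathbb{D}$ with $\omega(0)=0$ and $|\omega(z)|<1$, so $\omega$ is a Schwarz function and $|\omega(z)|\le r$ by the Schwarz lemma recalled in Section~2. Writing $z=re^{i\theta}$ and differentiating along the radius, I would use the identity $\partial_r\log|g(z)/z|=\tfrac1r\,\mathrm{Re}\,\omega(z)$, whence $-1\le \partial_r\log|g(z)/z|\le 1$. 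Integrating from $0$ to $r$ and using $g(z)/z\to1$ as $r\to0$ gives $-r\le\log|g(z)/z|\le r$, that is,
\begin{align*}
re^{-r}\le |g(z)|\le re^{r}.
\end{align*}

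Next I would record that $f\in\widetilde{\mathcal{K}}_u$ means $\psi(z):=zf'(z)/g(z)-1$ is again a Schwarz function with $|\psi(z)|\le r$, so that $|zf'(z)|=|g(z)|\,|1+\psi(z)|$ with $1-r\le|1+\psi(z)|\le 1+r$. Combining this with the growth bounds for $g$ just obtained and dividing by $r=|z|$ yields
\begin{align*}
e^{-r}(1-r)\le|f'(z)|\le e^{r}(1+r),
\end{align*}
which is \eqref{N-25}; the lower bound uses $1-r>0$, valid since $r<1$. To prove \eqref{N-20}, I would then apply Lemma~\ref{N-15}, available because $\widetilde{\mathcal{K}}_u\subset\mathcal{S}$. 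Taking $m'(t)=e^{-t}(1-t)$ and $M'(t)=e^{t}(1+t)$ from \eqref{N-25}, a short antiderivative computation gives $\int_0^r M'(t)\,dt=re^{r}$ and $\int_0^r m'(t)\,dt=re^{-r}$ (both integrands have antiderivative $te^{\pm t}$), and the lemma then delivers $re^{-r}\le|f(z)|\le re^{r}$.

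The only genuinely delicate point is the lower growth bound in \eqref{N-20}: integrating $f'$ along the radius gives the upper bound $|f(z)|\le\int_0^r|f'(te^{i\theta})|\,dt$ at once, but the reverse inequality is false for a general analytic function, since $|f(z)|=\bigl|\int_0^r f'(te^{i\theta})e^{i\theta}\,dt\bigr|$ need not dominate $\int_0^r m'(t)\,dt$. It is precisely here that univalence of $f$ enters, through Privalov's Lemma~\ref{N-15}, which is tailored to convert radial derivative bounds into two-sided bounds on $|f|$. Finally, sharpness of all four inequalities would be exhibited by the functions $f(z)=ze^{\pm z}$, which arise by taking $\omega(z)=\psi(z)=\pm z$ and for which $zf'(z)/g(z)\equiv 1\pm z$ with $g(z)=ze^{\pm z}\in\mathcal{S}_u^*$.
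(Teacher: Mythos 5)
Your proof is correct and follows essentially the same route as the paper: decompose $zf'(z)=g(z)\left(1+\psi(z)\right)$ with $\psi$ a Schwarz function, bound $|g(z)/z|$ by $e^{\pm r}$, apply the Schwarz lemma to $|1+\psi(z)|$ to get \eqref{N-25}, and invoke Privalov's Lemma \ref{N-15} together with $\widetilde{\mathcal{K}}_u\subset\mathcal{S}$ to get \eqref{N-20}. The only difference is that you re-derive Singh's growth estimate $e^{-r}\le|g(z)/z|\le e^{r}$ for $g\in\mathcal{S}_u^*$ by integrating $\partial_r\log|g(z)/z|=\tfrac1r\,\mathrm{Re}\,\omega(z)$, where the paper simply cites it from Singh's Theorem 2, and your extremal pair $ze^{\pm z}$ is equivalent to the paper's single extremal $f_2(z)=ze^{z}$ evaluated on the positive and negative real axes.
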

Moreover, all the inequalities are sharp.
\begin{proof}
Since $f\in\widetilde{\mathcal{K}}_u$, it follows that there exists a function $g\in\mathcal{S}_u^*$ and another function $\omega\in \mathcal{B}_0$ such that
\begin{align}\label{N-30}
f'(z)=\frac{g(z)}{z}\left(1+\omega(z)\right).
\end{align}
Since $g\in\mathcal{S}_u^*$, it follows from \cite[Theorem 2]{1968-Singh} with $z=re^{i\theta},0\le r<1$,
\begin{align}\label{N-35}
e^{-r}\le \left|\frac{g(z)}{z}\right|\le e^r,
\end{align} 
and also by Schwarz lemma, we have
\begin{equation}\label{N-40}
1-r\le|1+\omega(z)|\le1+r.
\end{equation}
Thus from \eqref{N-30}, using \eqref{N-35} and \eqref{N-40}, we immediately obtain \eqref{N-25}.

To show that both the inequalities in \eqref{N-25} are sharp, let us consider $f_2\in\widetilde{\mathcal{K}}_u$, given by
$$
f_2'(z)=\frac{g(z)}{z}\left(1+z\right),
$$
where $g(z)=ze^z\in\mathcal{S}_u^*$. Then, for $0\leq r<1$, on positive real axis
$$
|f_2'(r)|=e^r(1+r)
$$
and on negative real axis 
$$
|f_2'(-r)|=e^{-r}(1-r).
$$

Since $\widetilde{\mathcal{K}}_u\subset\mathcal{S}$, so the inequalities in \eqref{N-20} follows from Lemma \ref{N-15}. The estimates in \eqref{N-20} are sharp for the function $f_2(z)=ze^z$ in $\widetilde{\mathcal{K}}_u$.
\end{proof}
\vspace{4mm}

\section{Radius Of Convexity}
A number $s\in[0,1]$ is called the radius of convexity of a particular subclass $\mathcal{F}$ of $\mathcal{A}$ if $s$ is the largest number such that $$
\mathfrak{Re}\left(1+\frac{zf''(z)}{f'(z)}\right)>0,\quad \mathrm{for}\quad |z|<s
$$ for all $f$ in $\mathcal{F}$.

In $2020$, Allu et al. \cite{2020-Allu-Sokol-Thomas} prove that the radius of convexity of the class $\mathcal{K}_u$ is $1/3$. In the next theorem, we show that when $f\in\widetilde{\mathcal{K}}_u$, the radius of convexity of $\widetilde{\mathcal{K}}_u$ is $(3-\sqrt{5})/2$, which is larger than $1/3$.
\begin{thm}
The radius of convexity of $\widetilde{\mathcal{K}}_u$ is $(3-\sqrt{5})/2$.
\end{thm}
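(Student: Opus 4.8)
The plan is to start from the structural representation already used in the previous theorem: since $f\in\widetilde{\mathcal{K}}_u$, there exist $g\in\mathcal{S}_u^*$ and a Schwarz function $\omega\in\mathcal{B}_0$ with
\begin{align*}
f'(z)=\frac{g(z)}{z}\left(1+\omega(z)\right).
\end{align*}
Taking the logarithmic derivative and multiplying by $z$, the constant contributions cancel and I obtain
\begin{align*}
1+\frac{zf''(z)}{f'(z)}=\frac{zg'(z)}{g(z)}+\frac{z\omega'(z)}{1+\omega(z)}.
\end{align*}
Thus the whole problem reduces to bounding the real parts of these two terms from below on the circle $|z|=r$.

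For the first term I would use that $g\in\mathcal{S}_u^*$ means $\psi(z):=zg'(z)/g(z)-1$ defines a Schwarz function $\psi\in\mathcal{B}_0$; by Schwarz's lemma $|\psi(z)|\le r$, whence $\mathrm{Re}\,(zg'(z)/g(z))\ge 1-r$. The second term is more delicate. Using the Schwarz--Pick lemma $|\omega'(z)|\le(1-|\omega(z)|^2)/(1-r^2)$ together with $|1+\omega(z)|\ge 1-|\omega(z)|$, I get
\begin{align*}
\left|\frac{z\omega'(z)}{1+\omega(z)}\right|\le\frac{r\,(1+|\omega(z)|)}{1-r^2}\le\frac{r(1+r)}{1-r^2}=\frac{r}{1-r},
\end{align*}
where the last step uses $|\omega(z)|\le r$ and monotonicity in $|\omega(z)|$. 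Combining the two estimates yields
\begin{align*}
\mathrm{Re}\left(1+\frac{zf''(z)}{f'(z)}\right)\ge(1-r)-\frac{r}{1-r}=\frac{r^2-3r+1}{1-r},
\end{align*}
which is positive exactly when $r^2-3r+1>0$, i.e. for $r<(3-\sqrt{5})/2$ (the other root exceeds $1$). This establishes the lower bound for the radius of convexity.

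For sharpness I would take $g(z)=ze^{-z}$, which lies in $\mathcal{S}_u^*$ since $zg'(z)/g(z)-1=-z$, and $\omega(z)=-z\in\mathcal{B}_0$, producing $f'(z)=(1-z)e^{-z}$ with $f\in\widetilde{\mathcal{K}}_u$. A direct computation then gives
\begin{align*}
1+\frac{zf''(z)}{f'(z)}=\frac{z^2-3z+1}{1-z},
\end{align*}
which at $z=r$ equals $(r^2-3r+1)/(1-r)$ and vanishes precisely at $r=(3-\sqrt{5})/2$, becoming negative immediately beyond it. Hence the bound is attained and the radius of convexity is exactly $(3-\sqrt{5})/2$. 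The main obstacle is the estimate of the term $z\omega'(z)/(1+\omega(z))$: one must couple the Schwarz--Pick bound on $|\omega'|$ with the lower bound on $|1+\omega|$ and then notice the fortunate cancellation of the factor $1+|\omega(z)|$ against $1-r^2=(1-r)(1+r)$ at $|\omega(z)|=r$, which is what both produces the clean bound $r/(1-r)$ and is realized by the extremal choice $\omega(z)=-z$, so that the two independent estimates are simultaneously sharp.
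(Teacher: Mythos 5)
Your proof is correct and follows essentially the same route as the paper: the same factorization $f'(z)=\frac{g(z)}{z}(1+\omega(z))$, the same Schwarz-lemma bound $\mathrm{Re}\,(zg'(z)/g(z))\ge 1-r$, the same Schwarz--Pick estimate yielding $r/(1-r)$ for the second term, and the same quadratic $1-3r+r^2$. Even your extremal function $f'(z)=(1-z)e^{-z}$ is just the rotation $-f_2(-z)$ of the paper's extremal $f_2(z)=ze^z$, evaluated on the positive rather than the negative real axis.
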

\begin{proof}
Since $f\in\widetilde{\mathcal{K}}_u$, it follows that there exists a function $g\in\mathcal{S}_u^*$ and another function $\omega\in \mathcal{B}_0$ such that
$$
f'(z)=\frac{g(z)}{z}\left(1+\omega(z)\right).
$$
Thus taking logarithmic derivative we have,
\begin{align}\label{N-45}
1+\frac{zf''(z)}{f'(z)}=\frac{zg'(z)}{g(z)}+\frac{z\omega'(z)}{1+\omega(z)}.
\end{align}
Now using Schwarz lemma, for $g\in\mathcal{S}_u^*$, we have
$$
\mathfrak{Re}\left\{\frac{zg'(z)}{g(z)}\right\}
\ge 1-|z|.
$$
Thus from \eqref{N-45}, for $z=re^{i\theta}$ and $0\le r<1$,
\begin{align*}
\mathfrak{Re}\left\{1+\frac{zf''(z)}{f'(z)}\right\}&\geq \mathfrak{Re}\left\{\frac{zg'(z)}{g(z)}\right\}-\left|\frac{z\omega'(z)}{1+\omega(z)}\right|\\&\geq1-|z|-\frac{|z|\left(1-|\omega(z)|^2\right)}{(1-|\omega(z)|)(1-|z|^2)}\\&\geq 1-r-\frac{r}{1-r}\\&=\frac{1-3r+r^2}{1-r}>0,
\end{align*}
when $0\le r<(3-\sqrt{5})/2$. Thus the radius of convexity for the class $\widetilde{\mathcal{K}}_u$ is at least $(3-\sqrt{5})/2$.

To show that this is the largest such radius, let us consider the function $f_2\in\widetilde{\mathcal{K}}_u$, given by
$$
f_2(z)=ze^z,\quad z\in\mathbb{D}.
$$
Therefore,
$$
1+\frac{zf_2''(z)}{f_2'(z)}=\frac{1+3z+z^2}{1+z}.
$$
For $z=-(3-\sqrt{5})/2$, we have
$$
\mathfrak{Re}\left\{1+\frac{zf_2''(z)}{f_2'(z)}\right\}=0.
$$ 
Thus the radius of convexity for the class $\widetilde{\mathcal{K}}_u$ is at least $(3-\sqrt{5})/2$.

\end{proof}

\section{Fekete-Szeg\"{o} Problem}
Recently, Ali et al. \cite{2024-Ali-Nurezzaman} obtained sharp estimates of $\Phi_\lambda(f),~\lambda\in\mathbb{R}$ completely for functions in the class $\mathcal{K}_{u}$. In the next theorem, we give sharp bound of $\Phi_\lambda(f),~\lambda\in\mathbb{R}$ for functions $f$ in $\widetilde{\mathcal{K}}_u$.
\begin{thm}\label{N-47}
Let $f\in \widetilde{\mathcal{K}}_u$ be given by \eqref{N-01}. Then for every $\lambda\in \mathbb{R}$
$$
\Phi_\lambda(f)=|a_3-\lambda a_2^2|\leq \begin{cases}
\dfrac{1}{2}-\lambda &\text{if}\quad \lambda\leq-\dfrac{1}{3},\\[3mm]
\dfrac{14-3\lambda}{6(4+3\lambda)} &\text{if}\quad -\dfrac{1}{3}\leq \lambda \leq \frac{1}{6},\\[3mm]
\dfrac{1}{2} &\text{if}\quad \frac{1}{6}\leq \lambda \leq 1,\\[3mm]
\lambda-\dfrac{1}{2} &\text{if}\quad \lambda\geq 1.
\end{cases}
$$
Moreover, all the inequalities are sharp.
\end{thm}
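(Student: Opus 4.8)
The plan is to reduce the functional $\Phi_\lambda(f)$ to a pair of Schwarz-function coefficient problems and then invoke Lemma~\ref{N-17}.

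First I would record the defining representations. Since $f\in\widetilde{\mathcal K}_u$, there are $g\in\mathcal S_u^*$ and $\omega\in\mathcal B_0$ with $zf'(z)=g(z)(1+\omega(z))$, and since $g\in\mathcal S_u^*$ there is $\psi\in\mathcal B_0$ with $zg'(z)=g(z)(1+\psi(z))$. Writing $g(z)=z+b_2z^2+\cdots$, $\omega(z)=c_1z+c_2z^2+\cdots$ and $\psi(z)=d_1z+d_2z^2+\cdots$, I would equate the $z^2$- and $z^3$-coefficients in both relations. The starlikeness relation yields $b_2=d_1$ and $b_3=(d_1^2+d_2)/2$, and feeding these into the relation for $f$ gives
\begin{align*}
a_2=\frac{c_1+d_1}{2},\qquad a_3=\frac{1}{6}\left(d_1^2+d_2+2c_1d_1+2c_2\right).
\end{align*}

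Next I would assemble the functional. A direct computation gives
\begin{align*}
12\left(a_3-\lambda a_2^2\right)=4c_2+2d_2-3\lambda c_1^2+(2-3\lambda)d_1^2+(4-6\lambda)c_1d_1.
\end{align*}
Using the sharp Schwarz-coefficient bounds $|c_2|\le 1-|c_1|^2$ and $|d_2|\le 1-|d_1|^2$, and choosing the (independent) phases of $c_2$ and $d_2$ to align with the remaining quadratic term, the extremal problem becomes exactly
\begin{align*}
12\max\Phi_\lambda(f)=\Omega\!\left(4,\,2,\,-3\lambda,\,2-3\lambda,\,2-3\lambda\right),
\end{align*}
with $u_1=c_1,\ v_1=d_1$ in the notation of Lemma~\ref{N-17}; the maximum is attainable because the coefficient region $|c_2|\le 1-|c_1|^2$ is a full disk, realized at its boundary by an order-$2$ Blaschke product.

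Then I would apply Lemma~\ref{N-17} with $A=4,\ B=2,\ K=-3\lambda,\ L=M=2-3\lambda$ (using, where convenient, the symmetry $(A,K)\leftrightarrow(B,L)$ of the functional). Here $KL=-3\lambda(2-3\lambda)$, so the sign of $KL$ changes at $\lambda=0$ and $\lambda=2/3$, and I would compute $D=(|K|-|A|)(|L|-|B|)-M^2$ together with the conditions (A1), (B1), (B2) in each range. The main obstacle is bookkeeping: the lemma's natural thresholds $0$ and $2/3$ do not coincide with the answer's breakpoints $-1/3,\,1/6,\,1$, because the latter arise from whether the optimizing pair $(|c_1|,|d_1|)$ sits at a corner or in the interior of the square $[0,1]^2$. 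I expect the corner $|c_1|=|d_1|=1$ to govern $\lambda\le-1/3$ (giving $\tfrac12-\lambda$) and $\lambda\ge1$ (giving $\lambda-\tfrac12$), an interior critical point in $|c_1|$ with $|d_1|=1$ to govern $-1/3\le\lambda\le1/6$ (giving $\tfrac{14-3\lambda}{6(4+3\lambda)}$), and the configuration $|c_1|=|d_1|=0$ to govern $1/6\le\lambda\le1$ (giving $\tfrac12$); verifying the transitions at the breakpoints is the delicate step.

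Finally I would establish sharpness by exhibiting the Schwarz functions that saturate each case: $\omega(z)=\psi(z)=z$, i.e. $f(z)=f_2(z)=ze^z$, for $\lambda\le-1/3$ and $\lambda\ge1$; $\omega(z)=\psi(z)=z^2$, i.e. $f(z)=ze^{z^2/2}$, for $1/6\le\lambda\le1$; and for the middle range the genuine order-$2$ Blaschke product $\omega(z)=z\,\dfrac{z+a}{1+az}$, with $a\in(0,1)$ chosen as the interior maximizer for the given $\lambda$, together with $\psi(z)=z$, which attains $|c_2|=1-|c_1|^2$. Invariance of $\Phi_\lambda$ under the rotations $R_\theta$ lets me normalize phases, and computing $a_2,a_3$ for these choices and checking equality completes the proof.
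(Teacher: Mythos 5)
Your setup is exactly the paper's: the same two--Schwarz-function representation $zf'(z)=g(z)(1+\omega(z))$, $zg'(z)=g(z)(1+\psi(z))$, the same coefficient identities $a_2=(c_1+d_1)/2$ and $6a_3=2c_2+d_2+d_1^2+2c_1d_1$, and the same reduction to Lemma~\ref{N-17}; your parameters $(4,2,-3\lambda,2-3\lambda,2-3\lambda)$ are the paper's $(A,B,K,L,M)=\left(\frac{1}{3},\frac{1}{6},-\frac{\lambda}{4},\frac{2-3\lambda}{12},\frac{2-3\lambda}{12}\right)$ multiplied by $12$, which is harmless since $\Omega$ is positively homogeneous. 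Your extremal functions are also the paper's: $ze^z$ for the two outer ranges, $ze^{z^2/2}$ (the paper writes it as $\int_0^z e^{t^2/2}(1+t^2)\,dt$) for $1/6\le\lambda\le1$, and the order-two Blaschke choice $\omega(z)=z(z+v_1)/(1+v_1z)$, $\psi(z)=z$ in the middle range, where the paper supplies the maximizer explicitly as $v_1=(2-3\lambda)/(4+3\lambda)$. Your remark that the bound is attained because every pair $(c_1,c_2)$ with $|c_2|\le 1-|c_1|^2$ comes from a Schwarz function is a clean way to package sharpness, and is sound.

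The gap is that the application of Lemma~\ref{N-17} --- the part of the argument that actually produces the four-piece answer and its breakpoints $-1/3,\,1/6,\,1$ --- is never carried out. You say you \emph{expect} the corner $|c_1|=|d_1|=1$ to govern $\lambda\le-1/3$ and $\lambda\ge1$, an interior critical point to govern $[-1/3,1/6]$, and the origin to govern $[1/6,1]$, but that expectation is precisely the assertion to be proved, and proving it is where essentially all of the work lies: on each subinterval one must determine the sign of $KL$ (it changes at $0$ and $2/3$), the sign of $D=(|K|-|A|)(|L|-|B|)-M^2$, and the validity of conditions (A1), (B1), (B2), and then read off the correct branch of the lemma. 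The paper needs four cases with further subcases split at $0$, $(3-\sqrt{5})/3$ and $2/3$: for instance on $[1/6,(3-\sqrt{5})/3]$ condition (B1) fails and one must compute $R=(1-6\lambda)/(12+9\lambda)\le 0$ to conclude the bound $|A|+|B|=1/2$, while on $[(3-\sqrt{5})/3,2/3]$ condition (B1) holds so $R=0$, and on $[2/3,1]$ the sign of $KL$ flips and a different branch (with $D\ge0$) gives the same value $1/2$; similarly the middle-range bound $\frac{14-3\lambda}{6(4+3\lambda)}$ arises from two different branches of the lemma on $[-1/3,0]$ and $[0,1/6]$. None of your case attributions or the matching at the breakpoints can be confirmed without these verifications, so as written the proposal is a correct plan whose decisive computations are missing.
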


\begin{proof}
Let $f\in \widetilde{\mathcal{K}}_u$ be of the form \eqref{N-01}. Then there exists a function $g(z)=z+\sum\limits_{n=2}^{\infty}b_nz^n$ in $\mathcal{S}_u^* $ and another function $\omega_1(z)=\sum\limits
_{n=1}^{\infty}c_nz^n$ in $\mathcal{B}_0$ such that
\begin{align}\label{N-50}
f'(z)=\frac{g(z)}{z}\left(1+\omega_1(z)\right).
\end{align}
From \eqref{N-50}, comparing the coefficient of $z^2$ and $z^3$ on both sides,  we have
\begin{align}\label{N-55}
a_2=\frac{b_2}{2}+\frac{c_1}{2}\quad\text{and}\quad a_3=\frac{c_2}{3}+\frac{b_3}{3}+\frac{1}{3}b_2c_1.
\end{align}
Since $g\in \mathcal{S}_u^* $, it follows that there exists another $\omega_2(z)\in\mathcal{B}_0$ of the form $\omega_2(z)=\sum\limits_{n=1}^{\infty}d_nz^n$ such that
\begin{align}\label{N-60}
 \frac{zg'(z)}{g(z)}=1+ \omega_2(z).
\end{align}
On comparing the coefficients of $z^2$ and $z^3$ on both sides, we obtain
\begin{align}\label{N-65}
b_2=d_1\quad\text{and}\quad b_3=\frac{d_2+d_1^2}{2}.
\end{align}
From \eqref{N-55} and  \eqref{N-65}, one can easily obtain
\begin{align*}
a_2=\frac{c_1}{2}+\frac{d_1}{2}\quad\text{and}\quad a_3=\frac{c_2}{3}+\frac{d_2}{6}+\frac{d_1^2}{6}+\frac{1}{3}c_1d_1.
\end{align*}
Therefore, for any $\lambda\in \mathbb{R}$, we have
\begin{align*}
a_3-\lambda a_2^2=Ac_2+Bd_2+Kc_1^2+Ld_1^2+2Mc_1d_1,
\end{align*}
where
$$A=\frac{1}{3}, ~B=\frac{1}{6},~ K=-\frac{\lambda}{4},~ M=\frac{2-3\lambda}{12},~L=\frac{2-3\lambda}{12}.$$
Thus,
\begin{align*}
|a_3-\lambda a_2^2|&\leq |A||c_2|+|B||d_2|+|Kc_1^2+Ld_1^2+2Mc_1d_1|\\
&\leq |A|(1-|c_1|^2)+|B|(1-|d_1|^2)+|Kc_1^2+Ld_1^2+2Mc_1d_1|.\nonumber
\end{align*}
Now, we have to find the maximum value of $|a_3-\lambda a_2^2|$ when $|c_1|\leq 1,~ |d_1|\leq 1$. To do this we will use Lemma \ref{N-17} and we consider the following five cases.\\
\textbf{Case-1:} Let $\lambda\leq -\frac{1}{3}$. A simple calculation shows that
$$KL=-\frac{\lambda(2-3\lambda)}{48}\geq0,~ D=-\frac{1-6\lambda}{36}<0,~ |A|\le |M|+|K|,~ |B|\le |M|+|L|.$$
Therefore, from Lemma \ref{N-17}, we have
 $$|a_3-\lambda a_2^2|\leq|K|+2|M|+|L|=\dfrac{1}{2}-\lambda.$$
 
The inequality is sharp and the equality holds for the function $f_2\in \widetilde{\mathcal{K}}_u$ given by \eqref{N-50} and \eqref{N-60} with  
$$
\omega_1(z)=z\quad\mathrm{and}\quad\omega_2(z)=z,
$$ 
that is,
$$
f_2(z)=ze^z=z+z^2+\frac{1}{2}z^3+\cdots, \quad z\in\mathbb{D}.
$$
\textbf{Case-2:} Let $-\frac{1}{3}\leq\lambda\leq \frac{1}{6}$. For this, we consider the following subcases.\\
\textbf{Subcase-2a:} For $-\frac{1}{3}\leq\lambda\leq 0$, a simple calculation shows that
$$
KL=-\frac{\lambda(2-3\lambda)}{48}\geq0,~ D=-\frac{1-6\lambda}{36}<0.
$$
Also, 
$$
\frac{1}{3}=|A|>|M|+|K|=\frac{1-3\lambda}{6}\quad \mathrm{for}\quad -\frac{1}{3}\leq\lambda\leq 0.
$$
Thus from Lemma \ref{N-17}, we obtain
\begin{equation*}
|a_3-\lambda a_2^2|\leq|A|+|L|-\dfrac{M^2}{|K|-|A|}=\dfrac{14-3\lambda}{6(4+3\lambda)}.
\end{equation*}
\textbf{Subcase-2b:} For $0\leq\lambda\leq 1/6$, it is easy to show that $KL=\displaystyle -\frac{\lambda(2-3\lambda)}{48}<0$. So, from Lemma \ref{N-17}, we have
\begin{align}\label{N-70}
|a_3-\lambda a_2^2|\leq|A|+|B|+\max\{0,R\},
\end{align}
where $R$ can be obtained from Lemma \ref{N-17}.
For $0\leq\lambda\leq 1/6$, we have
\begin{align*}
|M|-|K|=\frac{1-3\lambda}{6}\le \frac{1}{3}=|A|
\end{align*}
and
\begin{align*}
&|K|\sqrt{1-\dfrac{M^2}{KL}}\leq|A|\\
&\iff\dfrac{\lambda}{4}\sqrt{\dfrac{2}{3\lambda}} \le\frac{1}{3}\\ 
&\iff \lambda\leq\frac{8}{3},
\end{align*}
which is true for $0\leq\lambda\leq 1/6$. Thus, the condition {\it (A1)} of Lemma \ref{N-17} is satisfied.\\

 Again, for $0\leq\lambda\leq 1/6$, we have
$$
|M|-|L|=0\leq \frac{1}{6}= |B|
$$
and
\begin{align*}
&|L|\sqrt{1-\dfrac{M^2}{KL}}\leq|B|\\&\iff\frac{2-3\lambda}{12}\sqrt{\dfrac{2}{3\lambda}}\leq\frac{1}{6}\\ &\iff 9\lambda^2-18\lambda+4\leq 0
\end{align*}
which is not true for any $0\leq \lambda\leq1/6$.  Thus, the condition {\it (B1)} of Lemma \ref{N-17} is not satisfied.

Further, for $0\leq\lambda\leq 1/6$,
$$|L|+|M|=\frac{2-3\lambda}{6}\ge \frac{1}{6}\ge |B|$$
and so, the condition {\it (B2)} of Lemma \ref{N-17} is not satisfied.

Therefore, by Lemma \ref{N-17} we have
$$R=|L|-|B|+\dfrac{M^2}{|A|+|K|}=\dfrac{2-3\lambda}{12}-\frac{1}{6}+\dfrac{(2-3\lambda)^2}{12(4+3\lambda)}=\frac{1-6\lambda}{12+9\lambda}\geq0,
$$
when $0\leq\lambda\leq 1/6$ and consequently, from \eqref{N-70}, we have
$$
  |a_3-\lambda a_2^2|\leq|A|+|B|+R=\frac{14-3\lambda}{6(4+3\lambda)}.
$$
Combining the \textbf{Subcase-2a} and \textbf{Subcase-2b}, we get
$$
  |a_3-\lambda a_2^2|\leq\frac{14-3\lambda}{6(4+3\lambda)}.
$$

The inequality is sharp and the equality holds for the function $f\in \widetilde{\mathcal{K}}_u$ given by \eqref{N-50} and \eqref{N-60} with
$$
\omega_1(z)=\frac{z(z+v_1)}{1+v_1z}\quad \text{and}\quad \omega_2(z)=z,
$$
where
$$ 
v_1=\frac{(2-3\lambda)}{4+3\lambda},
$$
 that is,
 $$
 f(z)=\int_0^z\frac{e^t(1+2v_1t+t^2)}{(1+v_1 t)}dt=z+\frac{3}{4+3\lambda}z^2+\frac{56+84\lambda-9\lambda^2}{6(4+3\lambda)^2}z^3+\cdots.
 $$
 For this function
 $$
 |a_3-\lambda a_2^2|=\frac{56+84\lambda-9\lambda^2}{6(4+3\lambda)^2}-\frac{9\lambda}{(4+3\lambda)^2}=\frac{14-3\lambda}{6(4+3\lambda)}.
 $$
\textbf{Case-3:} Let $1/6\leq\lambda\leq 1$. For this, we consider the following subcases.\\
\textbf{Subcase-3a:} For $1/6\leq\lambda\leq(3-\sqrt{5})/3$, it is easy to show that $KL=\displaystyle -\frac{\lambda(2-3\lambda)}{48}<0$. So, from Lemma \ref{N-17}, we have
\begin{align}\label{N-72}
|a_3-\lambda a_2^2|\leq|A|+|B|+\max\{0,R\},
\end{align}
where $R$ can be obtained from Lemma \ref{N-17}.
For $1/6\leq\lambda\leq(3-\sqrt{5})/3$, we have
\begin{align*}
|M|-|K|=\frac{1-3\lambda}{6}\le \frac{1}{3}=|A|
\end{align*}
and
\begin{align*}
&|K|\sqrt{1-\dfrac{M^2}{KL}}\leq|A|\\
&\iff\dfrac{\lambda}{4}\sqrt{\dfrac{2}{3\lambda}} \le\frac{1}{3}\\ 
&\iff \lambda\leq\frac{8}{3},
\end{align*}
which is true for $1/6\leq\lambda\leq(3-\sqrt{5})/3$. Thus, the condition {\it (A1)} of Lemma \ref{N-17} is satisfied.\\

 Again, for $1/6\leq\lambda\leq(3-\sqrt{5})/3$, we have
$$
|M|-|L|=0\leq \frac{1}{6}= |B|
$$
and
\begin{align*}
&|L|\sqrt{1-\dfrac{M^2}{KL}}\leq|B|\\&\iff\frac{2-3\lambda}{12}\sqrt{\dfrac{2}{3\lambda}}\leq\frac{1}{6}\\ &\iff 9\lambda^2-18\lambda+4\leq 0
\end{align*}
which is not true for any $1/6\leq\lambda\leq(3-\sqrt{5})/3$.  Thus, the condition {\it (B1)} of Lemma \ref{N-17} is not satisfied.

Further, for $1/6\leq\lambda\leq(3-\sqrt{5})/3$,
$$|L|+|M|=\frac{2-3\lambda}{6}\ge \frac{1}{6}\ge |B|$$
and so, the condition {\it (B2)} of Lemma \ref{N-17} is not satisfied.

Therefore, by Lemma \ref{N-17} we have
$$R=|L|-|B|+\dfrac{M^2}{|A|+|K|}=\dfrac{2-3\lambda}{12}-\frac{1}{6}+\dfrac{(2-3\lambda)^2}{12(4+3\lambda)}=\frac{1-6\lambda}{12+9\lambda}\leq0,$$
when $1/6\leq\lambda\leq(3-\sqrt{5})/3$ and consequently, from \eqref{N-72}, we have
$$
  |a_3-\lambda a_2^2|\leq|A|+|B|=\frac{1}{2}.
$$
\textbf{Subcase-3b:} For $(3-\sqrt{5})/3\leq\lambda\leq2/3$, it is easy to show that $KL=\displaystyle -\frac{\lambda(2-3\lambda)}{48}<0$. So, from Lemma \ref{N-17}, we have
\begin{align}\label{N-75}
|a_3-\lambda a_2^2|\leq|A|+|B|+\max\{0,R\},
\end{align}
where $R$ can be obtained from Lemma \ref{N-17}.
Now, for $(3-\sqrt{5})/3\leq\lambda\leq2/3$, we have
\begin{align*}
|M|-|K|=\frac{1-3\lambda}{6}\le \frac{1}{3}=|A|
\end{align*}
and
\begin{align*}
&|K|\sqrt{1-\dfrac{M^2}{KL}}\leq|A|\\
&\iff\dfrac{\lambda}{4}\sqrt{\frac{2}{3\lambda}} \le\frac{1}{3}\\ 
&\iff \lambda\leq\frac{8}{3},
\end{align*}
which is true for $(3-\sqrt{5})/3\leq\lambda\leq2/3$. Thus, the condition {\it (A1)} of Lemma \ref{N-17} is satisfied.\\

 Again, for $(3-\sqrt{5})/3\leq\lambda\leq2/3$, we have
$$
|M|-|L|=0\leq \frac{1}{6}= |B|
$$
and
\begin{align*}
&|L|\sqrt{1-\dfrac{M^2}{KL}}\leq|B|\\&\iff\frac{2-3\lambda}{12}\sqrt{\frac{2}{3\lambda}}\leq\frac{1}{6}\\ &\iff 9\lambda^2-18\lambda+4\leq 0
\end{align*}
which is true for any $(3-\sqrt{5})/3\leq\lambda\leq2/3$.  Thus, the condition {\it (B1)} of Lemma \ref{N-17} is satisfied.
Thus from Lemma \ref{N-17}, we get $R=0$. Therefore,  from \eqref{N-75}, we have
$$
|a_3-\lambda a_2^2|\leq|A|+|B|=\frac{1}{2}.
$$
\textbf{Subcase-3c:} For $2/3\leq \lambda\leq1$, a simple calculation shows that
$$
KL=\frac{\lambda(3\lambda-2)}{48}\geq0,~ D=\frac{1-\lambda}{12}\geq0, ~ |A|+|B|\ge |K|+|L|.
$$
Therefore, from Lemma \ref{N-17}, we have
$$
|a_3-\lambda a_2^2|\leq|A|+|B|=\frac{1}{2}.
$$
Combining the \textbf{Subcase-3a}, \textbf{Subcase-3b} and \textbf{Subcase-3c}, we get
$$
|a_3-\lambda a_2^2|\leq\frac{1}{2}.
$$

The inequality is sharp and the equality holds for the function $f_3\in \widetilde{\mathcal{K}}_u$ given by \eqref{N-50} and \eqref{N-60} with
$$
\omega_1(z)=z^2 \quad \text{and}\quad \omega_2(z)=z^2,
$$
that is,
$$
f_3(z)=\int_0^z e^{\frac{t^2}{2}}(1+t^2)dt=z+\frac{1}{2}z^3+\frac{1}{8}z^5+\cdots, \quad z\in\mathbb{D}.
$$

\textbf{Case-4:} Let $\lambda\geq1$. A simple calculation shows that
$$KL=\frac{\lambda(3\lambda-2)}{48}\geq0,~ D=\frac{1-\lambda}{12}<0, ~ |A|\le |M|+|K|, ~ |B|\le |M|+|L|.$$
Thus from Lemma \ref{N-17}, we obtain
$$|a_3-\lambda a_2^2|\leq|K|+2|M|+|L|=\lambda-\frac{1}{2}.$$

The inequality is sharp and the equality holds for the function $f_2\in \widetilde{\mathcal{K}}_u$ given by \eqref{N-50} and \eqref{N-60} with  
$$
\omega_1(z)=z\quad \mathrm{and}\quad\omega_2(z)=z,
$$
that is,
$$
f_2(z)=ze^z=z+z^2+\frac{1}{2}z^3+\cdots,\quad z\in\mathbb{D}.
$$
\end{proof}
\section{pre-Schwarzian norm}
Let $\mathcal{LU}$ represents the subclass of $\mathcal{H}$ that includes all locally univalent functions in $\mathbb{D}$, i.e., $\mathcal{LU}:=\{f\in\mathcal{H}:f'(z)\ne 0\text{ for all }z\in\mathbb{D}\}$. The pre-Schwarzian derivative of a locally univalent function $f\in\mathcal{LU}$ is defined as 
$$P_f(z)=\frac{f''(z)}{f'(z)},
$$
and the pre-Schwarzian norm, often known as the hyperbolic sup-norm, is defined as
\begin{align*}
||P_f||=\sup\limits_{z\in\mathbb{D}}(1-|z|^2)|P_f(z)|. 
\end{align*} 
In the theory of Teichm\"{u}ller spaces, this norm has important implications. It is well known that $||P_f||\leq 6$ for a univalent function $f$, and the bound is sharp. In contrast, if $f$ is univalent in $\mathbb{D}$, then $||P_f||\leq 1$ (see \cite{1972-Becker, 1984-Becker-Pommerenke}). Also, in 1976, Yamashita \cite{1976-Yamashita} proved that $||P_f ||$ is finite if and only if $f$ is uniformly locally univalent in $\mathbb{D}$. Moreover, if $||P_f||<2$, then $f$ is bounded in $\mathbb{D}$ (see \cite{2002-Kim-Sugawa}). In our next theorem, we establish sharp estimate of the pre-Schwarzian norm for functions in $\widetilde{\mathcal{K}}_{u}$.

\begin{thm}\label{N-85}
Let $f\in \widetilde{\mathcal{K}}_{u}$ be of the form \eqref{N-01}. Then
$$
 ||P_f||\le \dfrac{9}{4},
$$
and the estimate is sharp.
\end{thm}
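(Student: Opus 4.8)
The plan is to compute the pre-Schwarzian derivative $P_f$ directly from the defining representation and then bound it termwise by the Schwarz and Schwarz--Pick lemmas recalled in Section~2. Writing $f'(z)=\frac{g(z)}{z}(1+\omega(z))$ with $g\in\mathcal{S}_u^*$ and $\omega\in\mathcal{B}_0$ as in \eqref{N-50}, and using $\frac{zg'(z)}{g(z)}=1+\omega_2(z)$ with $\omega_2\in\mathcal{B}_0$ as in \eqref{N-60}, logarithmic differentiation (the identity already recorded in \eqref{N-45}) gives, after the $1/z$ terms cancel,
\begin{equation*}
P_f(z)=\frac{f''(z)}{f'(z)}=\frac{\omega_2(z)}{z}+\frac{\omega'(z)}{1+\omega(z)}.
\end{equation*}
This reduces the problem to estimating two quantities built from Schwarz functions.

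Next I would bound the two terms separately. For the first term the Schwarz lemma gives $|\omega_2(z)|\le|z|$, hence $(1-|z|^2)\,|\omega_2(z)/z|\le 1-|z|^2$. For the second term the Schwarz--Pick lemma gives $|\omega'(z)|\le(1-|\omega(z)|^2)/(1-|z|^2)$, while $|1+\omega(z)|\ge 1-|\omega(z)|$; combining these and cancelling the factor $1-|z|^2$ yields
\begin{equation*}
(1-|z|^2)\left|\frac{\omega'(z)}{1+\omega(z)}\right|\le 1+|\omega(z)|\le 1+|z|,
\end{equation*}
the last step again by the Schwarz lemma. Adding the two estimates and writing $r=|z|$, I obtain $(1-|z|^2)|P_f(z)|\le (1-r^2)+(1+r)=2+r-r^2$. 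It then remains to maximize $\psi(r)=2+r-r^2$ over $r\in[0,1)$; since $\psi'(r)=1-2r$, the maximum is attained at $r=1/2$ with $\psi(1/2)=9/4$, giving $\|P_f\|\le 9/4$.

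For sharpness I would use the function $f_2(z)=ze^z$, already shown to lie in $\widetilde{\mathcal{K}}_u$ (it corresponds to $\omega(z)=\omega_2(z)=z$, with $g(z)=ze^z\in\mathcal{S}_u^*$). A direct computation gives $f_2'(z)=e^z(1+z)$ and $P_{f_2}(z)=(2+z)/(1+z)$, so at $z=-1/2$ one finds $(1-|z|^2)|P_{f_2}(z)|=\frac{3}{4}\cdot 3=\frac{9}{4}$. Because the general estimate already gives $\|P_f\|\le 9/4$ for every $f\in\widetilde{\mathcal{K}}_u$, exhibiting this single point where the value $9/4$ is attained suffices to conclude $\|P_{f_2}\|=9/4$; in particular I need not evaluate the supremum over the whole disk.

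The only genuinely delicate point is the sharpness claim: it requires the two a priori independent estimates to be simultaneously tight, which one verifies at $z=-1/2$. There $|\omega_2(z)/z|=1$, the Schwarz--Pick inequality for $\omega(z)=z$ holds with equality, $|1+\omega(z)|=1-|\omega(z)|$, and the two summands of $P_{f_2}(-1/2)$ are positive reals, so their moduli add. Everything else is a routine application of the Schwarz and Schwarz--Pick lemmas, so I anticipate no substantial obstacle beyond bookkeeping once the decomposition of $P_f$ is in place.
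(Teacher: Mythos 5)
Your proof is correct and follows essentially the same route as the paper: the same decomposition $P_f(z)=\omega_2(z)/z+\omega_1'(z)/(1+\omega_1(z))$, the same Schwarz/Schwarz--Pick estimates giving the bound $2+|z|-|z|^2\le 9/4$, and the same extremal function $f_2(z)=ze^z$. Your sharpness argument is, if anything, slightly tidier—evaluating at the single point $z=-1/2$ and invoking the general upper bound, rather than computing the supremum along the negative real axis as the paper does—but this is a cosmetic difference, not a different method.
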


\begin{proof}
Let $f\in \widetilde{\mathcal{K}}_{u}$ it follows that there exists a function $g\in\mathcal{S}_u^*$ and another function $\omega_1\in \mathcal{B}_0$ such that
 $$
 \frac{zf'(z)}{g(z)}=1+\omega_1(z).
 $$
Taking logarithmic derivative on both sides we have,
\begin{align}\label{N-90}
 \frac{f''(z)}{f'(z)}
 +\frac{1}{z}=\frac{g'(z)}{g(z)}+\frac{\omega_1'(z)}{1+\omega_1(z)}.
\end{align}
Since $g\in\mathcal{S}_{u}^*$, so there exists a function $\omega_2\in \mathcal{B}_{0}$ such that
 $$
 \frac{g'(z)}{g(z)}=\frac{1+\omega_2(z)}{z}.
 $$
Hence, from \eqref{N-90} we have
$$
\left|\frac{f''(z)}{f'(z)}\right|=\left|\frac{\omega_2(z)}{z}+\frac{\omega_1'(z)}{1+\omega_1(z)}\right|\le\frac{|\omega_2(z)|}{|z|}+\frac{|\omega_1'(z)|}{1-|\omega_1(z)|}.
$$
By Schwarz lemma and Schwarz-Pick lemma, we have

\begin{align*}
\left|\frac{f''(z)}{f'(z)}\right|\le1+\frac{1+|\omega_1(z)|}{1-|z|^2}\le\frac{2+|z|-|z|^2}{1-|z|^2}.
\end{align*}
Thus,
\begin{align*}
||P_f||&=\sup\limits_{z\in\mathbb{D}}(1-|z|^2)\left|\frac{f''(z)}{f'(z)}\right|\\
&\le\sup\limits_{0
\le|z|<1}(2+|z|-|z|^2)\\&=\frac{9}{4}.
\end{align*}

To show that the above inequality is sharp, let us consider $f_2\in\widetilde{\mathcal{K}}_{u}$ of the form
 $$
 f_2(z)=z e^z,\quad z\in\mathbb{D}.
 $$
Then,
$$
\frac{f_2''(z)}{f_2'(z)}=\frac{2+z}{1+z}.
$$
Therefore,
$$
||f_2||=\sup\limits_{z\in\mathbb{D}}(1-|z|^2)\left|\frac{2+z}{1+z}\right|
$$
On the negative side of real axis, we note that
$$
\sup\limits_{0\le r<1}(1+r)(2-r)=\frac{9}{4},
$$
 hence $||f_2||=9/4$.
\end{proof}

\vspace{4mm}
\noindent\textbf{Data availability:}
Data sharing not applicable to this article as no data sets were generated or analyzed during the current study.\vspace{4mm}
\\
\noindent\textbf{Acknowledgement:}
The author would like to acknowledge Dr. Md Firoz Ali for his guidance and assistance.

\end{document}